\documentclass[12pt]{article}
\usepackage{amsmath,amsfonts,amsthm,amssymb, mathtools}
\usepackage{mathrsfs, graphicx,color,latexsym, tikz, calc,cite,enumerate,indentfirst}
\usepackage{booktabs}
\usepackage{tabularx}
\usetikzlibrary{shadows}
\usetikzlibrary{patterns,arrows,decorations.pathreplacing}
\textwidth 160mm \textheight230mm \oddsidemargin=0cm
\evensidemargin=0cm \topmargin=-1.5cm

\usepackage[colorlinks=true,
linkcolor=blue,citecolor=blue,
urlcolor=blue]{hyperref}

\newtheorem{theorem}{Theorem}[section]
\newtheorem{lemma}{Lemma}[section]

\newtheorem{corollary}{Corollary}[section]

\newtheorem{example}{Example}[section]

\allowdisplaybreaks

\title{\bf \Large Fractional revival on quasi-abelian Cayley graphs}

\author{Yi Fang$^a$, Xueyi Huang$^{a,}$\footnote{Corresponding author.}, \setcounter{footnote}{-1}\footnote{\emph{Email address:} fangyimath@163.com (Y. Fang),  huangxy@ecust.edu.cn (X. Huang), xiaogliu@nwpu.edu.cn (X. Liu),  zhanxfmath@163.com (X. Zhan).} Xiaogang Liu$^{b,c,d}$, Xiongfeng Zhan$^a$\\[2mm]
	\small $^a$School of Mathematics, East China University of Science and Technology, \\
	\small  Shanghai 200237, P. R. China\\
	\small $^b$School of Mathematics and Statistics,
Northwestern Polytechnical University,\\
\small Xi'an, Shaanxi 710072, P.R. China\\
\small $^c$Research \& Development Institute of Northwestern Polytechnical University in Shenzhen,\\
\small  Shenzhen, Guangdong 518063, P.R. China\\
\small $^d$Xi'an-Budapest Joint Research Center for Combinatorics,
Northwestern Polytechnical University,\\ \small  Xi'an, Shaanxi 710129, P.R. China
}

\date{ }

\begin{document}

\maketitle

\begin{abstract}

Fractional revival, a quantum transport phenomenon critical to entanglement generation in quantum spin networks, generalizes the notion of perfect state transfer on graphs. A Cayley graph 
$\mathrm{Cay}(G,S)$ is called quasi-abelian if its connection set  $S$ is a union of conjugacy classes of the group $G$. In this paper, we establish a necessary and sufficient condition for quasi-abelian Cayley graphs to have fractional revival. This extends a result of Cao and Luo (2022) on the existence of fractional revival in Cayley graphs over abelian groups.

	\par\vspace{2mm}
		
\noindent{\bfseries Keywords:} Fractional revival; Cayley graph; transition matrix
    
\noindent{\bfseries MSC 2020:} 05C50, 81P68
\end{abstract}

\baselineskip=0.202in

\section{Introduction}

The development of reliable quantum state transfer methods is of fundamental importance in quantum information theory. Bose \cite{B03} introduced a protocol employing spin chains as short-range quantum communication channels to achieve high-fidelity quantum state transmission. A qubit is said to exhibit perfect state transfer within a spin chain if its transmission fidelity reaches unity $1$. In this framework, spin chains are modeled as graphs, where vertices correspond to qubits and edges represent quantum couplings. The temporal evolution of quantum states can be simulated through the adjacency matrices associated with these graphs.

Let $\Gamma$ be a connected graph with vertex set $V(\Gamma)$ and edge set $E(\Gamma)$. The \textit{adjacency matrix} of $\Gamma$ is defined as $A: = A(\Gamma) = (a_{u,v})_{u,v \in V(\Gamma)}$, where $a_{u,v} = 1$ if $uv \in E(\Gamma)$, and $a_{u,v} = 0$ otherwise. The \textit{transition matrix} of $\Gamma$ with respect to $A$ is given by 
\begin{equation*}
    H(t) = \exp(\mathrm{i} tA) = \sum_{k=0}^\infty \frac{\mathrm{i}^k A^k t^k}{k!},~t \in \mathbb{R},~\mathrm{i} = \sqrt{-1}.
\end{equation*}
Let $\mathbb{C}^{|\Gamma|}$ ($|\Gamma|=|V(\Gamma)|$) denote the $n$-dimensional vector space over the complex field, and let $\mathbf{e}_u$ be the standard basis vector in $\mathbb{C}^n$ indexed by the vertex $u$. If $u$ and $v$ are distinct vertices in $\Gamma$ and there exists a time $t$ such that
\begin{equation*}
   H(t)\mathbf{e}_u=\gamma\mathbf{e}_v, 
\end{equation*} 
where $\gamma$ is a complex number and $|\gamma|=1$, then we say $\Gamma$ admits \textit{perfect state transfer}  from $u$ to $v$ at time $t$. 

The study of perfect state transfer originated in the work of Bose \cite{B03} and Christandl et al. \cite{CDDE05, CDDE04}. In recent years, the problem of characterizing those graphs that admits perfect state transfer has attracted increasing attention from researchers. In 2012, Godsil \cite{G12} provided a survey on perfect state transfer from a mathematical perspective.  For more results on perfect state transfer, we refer the reader to \cite{BP09, CCL20, WC22, TFC19, LCW22, CG11, DC16, CGGC15, CL15, G12, G10, LLZ21, WAF24, ZLZ20, PB17} and  references therein.

As a generalization of perfect state transfer, fractional revival was introduced by Pohill and Sudheesh \cite{RS15}. For distinct vertices  $u$ and $v$ in $\Gamma$, if there exists a time $t$ such that
\begin{equation*}
    H(t)\mathbf{e}_u = \alpha\mathbf{e}_u + \beta\mathbf{e}_v,
\end{equation*}
where $\alpha$ and $\beta$ are complex numbers satisfying $\beta\neq 0$ and $|\alpha|^2 + |\beta|^2 = 1$, we say that $\Gamma$ admits \textit{fractional revival} from $u$ to $v$ at time $t$, or  $(\alpha,\beta)$-\textit{revival}  occurs on $\Gamma$ from $u$ to $v$ at time $t$. In particular, if $\alpha = 0$, then $\Gamma$ has perfect state transfer from $u$ to $v$ at time $t$. Moreover, it was shown in \cite{CCTVZ19} that if $\Gamma$ has fractional revival from $u$ to $v$ at time $t$, then $\Gamma$ has fractional revival from $v$ to $u$ at the same time. Thus, we simply
say $\Gamma$ has fractional revival between $u$ and $v$ at time $t$. 

Up to now, there are  few results on fractional revival. Genest, Vinet and Zhedanov \cite{GVZ16} systematically studied quantum spin chains with fractional revival at two sites. Luo, Cao, Wang and Wu  \cite{LCW22} showed that cycles admit fractional revival  only when they have four or six vertices, while paths admit fractional revival only when they have two, three or four vertices. Godsil and Zhang \cite{GZ22} proposed a class of graphs admitting fractional revival between non-cospectral vertices. Chan, Coutinho,  Tamon, Vinet and Zhan \cite{CCTVZ20} investigated fractional revival in graphs whose adjacency matrices belong to the Bose–Mesner algebra of association schemes. Wang, Wang and Liu \cite{WWL24} established a necessary and sufficient condition for Cayley graphs
over finite abelian groups $G$ to have fractional revival between vertices $x$ and $x + a$, where $a$ is a fixed  element $G$. 
Cao and Luo \cite{CL22} characterized fractional revival between any two distinct vertices in Cayley graphs over abelian groups. Additionally, Wang, Wang and Liu \cite{WWL23} derived criteria for fractional revival in semi-Cayley graphs over finite abelian groups.

Let $G$ be a finite group with identity element $1$, and let $S$ be an inverse closed subset of $G$ with $1\notin S$.  The \textit{Cayley graph} of $G$ with respect to $S$, denoted by  $\mathrm{Cay}(G,S)$, is defined as the graph with vertex set $G$ and edge set $\{\{g,sg\}\mid g\in G, s\in S\}$. We say that  $\mathrm{Cay}(G,S)$ is  \textit{quasi-abelian} (or \textit{normal}) if $S$ is the union of some conjugacy classes of $G$ (cf. \cite{WX97}). Clearly, all Cayley graphs over abelian groups are quasi-abelian.

This paper investigates fractional revival in quasi-abelian Cayley graphs. We present a necessary and sufficient condition for the existence of fractional revival in such graphs and prove that all quasi-abelian Cayley graphs admitting fractional revival are integral. This generalizes the main result of Cao and Luo \cite{CL22}. Furthermore, we observe that every connected quasi-abelian Cayley graph over the symmetric group $S_n$ for $n\geq 3$ does not admit fractional revival.

The paper is organized as follows. In Section \ref{sec::2}, we determine the transition matrix structure for connected Cayley graphs with fractional revival, and review some notations and results from the representation theory of finite groups. In Section \ref{sec::3}, we establish the characterization of fractional revival in quasi-abelian Cayley graphs and recover the main result from \cite{CL22} as a special case.

\section{Preliminaries}\label{sec::2}
Let $G$ be a finite group, and let $\Gamma$ be a Cayley graph on $G$. As $\Gamma$ is vertex-transitive, it is easy to see that  if $(\alpha,\beta)$-revival occurs on $\Gamma$ from some vertex at time $t$, then $(\alpha,\beta)$-revival occurs from every vertex of $\Gamma$ at the same time $t$ (see \cite{CCTVZ20}). Based on this fact, we can deduce the following result.
\begin{lemma}\label{lem::1}(See also \cite{CL22})
    Let $G$ be a finite group, and let $\Gamma$ be a connected Cayley graph over $G$. Suppose that $\alpha$ and $\beta$ are two complex numbers satisfying $\beta\neq 0$ and $|\alpha|^2+|\beta|^2=1$. Then $(\alpha,\beta)$-revival occurs on $\Gamma$ from a vertex $u$ to a vertex $v$ at time $t$ if and only if 
    \begin{equation*}
        H(t) = \alpha I + \beta Q,
    \end{equation*}
    where $Q$ is a symmetric permutation matrix with zero diagonals and  $\alpha\overline{\beta} + \overline{\alpha} \beta = 0$.
\end{lemma}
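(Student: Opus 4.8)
The plan is to prove the two implications separately, with essentially all of the work in the forward direction. The backward implication is the routine one: assuming $H(t)=\alpha I+\beta Q$ with $Q$ a permutation matrix having zero diagonal and $\beta\neq0$, I fix any vertex $u$ and observe that $Q\mathbf{e}_u=\mathbf{e}_v$ for a unique vertex $v$, which satisfies $v\neq u$ precisely because $Q_{u,u}=0$; then $H(t)\mathbf{e}_u=\alpha\mathbf{e}_u+\beta\mathbf{e}_v$, and since $|\alpha|^2+|\beta|^2=1$ and $\beta\neq0$ by hypothesis, this is exactly $(\alpha,\beta)$-revival from $u$ to $v$. Note that for this direction neither the symmetry of $Q$ nor the phase condition $\alpha\overline{\beta}+\overline{\alpha}\beta=0$ is required; these belong to the list of necessary conditions extracted in the other direction.

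The forward direction is where the Cayley structure enters. The first step is to record that, because $\Gamma=\mathrm{Cay}(G,S)$ has $A_{x,y}=[xy^{-1}\in S]$, every power satisfies $(A^{k})_{x,y}=g_{k}(xy^{-1})$ for some function $g_{k}\colon G\to\mathbb{C}$, and passing to the exponential gives $H(t)_{x,y}=f_{t}(xy^{-1})$ for some $f_{t}\colon G\to\mathbb{C}$ (equivalently, $H(t)$ lies in the commutant of the right regular representation of $G$). Now suppose $(\alpha,\beta)$-revival occurs from $u$ to $v$, so that the column indexed by $u$ is $H(t)\mathbf{e}_{u}=\alpha\mathbf{e}_{u}+\beta\mathbf{e}_{v}$. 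Reading off entries and using this translation invariance forces $f_{t}(1)=\alpha$, $f_{t}(a)=\beta$ with $a:=vu^{-1}\neq1$, and $f_{t}(g)=0$ for every $g\notin\{1,a\}$. Hence $H(t)=\alpha I+\beta Q$ globally, where $Q_{x,y}=[xy^{-1}=a]$ has exactly one $1$ in each row and each column, and its diagonal vanishes because $a\neq1$.

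It then remains to extract the two extra properties from the analytic structure of $H(t)$. Since $A$ is a real symmetric matrix, $H(t)=\exp(\mathrm{i}tA)$ is complex symmetric, i.e. $H(t)^{\top}=H(t)$; comparing off-diagonal entries of $\alpha I+\beta Q$ with those of its transpose and using $\beta\neq0$ yields $Q=Q^{\top}$, which is the statement $a=a^{-1}$, so $Q$ is a symmetric permutation matrix (a fixed-point-free involution). Finally, $A$ being Hermitian makes $H(t)$ unitary, so $I=H(t)H(t)^{*}=(\alpha I+\beta Q)(\overline{\alpha} I+\overline{\beta} Q)=(|\alpha|^2+|\beta|^2)I+(\alpha\overline{\beta}+\overline{\alpha}\beta)Q$, where I used $Q^{*}=Q$ and $Q^2=I$; since $|\alpha|^2+|\beta|^2=1$ and $Q\neq0$, this forces $\alpha\overline{\beta}+\overline{\alpha}\beta=0$. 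The one step deserving care, and the main conceptual obstacle, is the passage from the single-column revival condition to the global matrix identity: in an arbitrary graph the hypothesis constrains only one column, and it is precisely the translation invariance $H(t)_{x,y}=f_{t}(xy^{-1})$ afforded by the Cayley structure (equivalently, the vertex-transitivity statement quoted before the lemma) that propagates it to all columns and makes $Q$ a genuine permutation matrix.
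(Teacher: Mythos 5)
Your proof is correct and follows essentially the same route as the paper: translation invariance of the Cayley adjacency matrix (which you phrase as $H(t)_{x,y}=f_t(xy^{-1})$, the paper as $a_{xg,yg}=a_{x,y}$) propagates the single-column revival condition to the identity $H(t)=\alpha I+\beta Q$, and then symmetry and unitarity of $H(t)$ yield $Q=Q^{\top}$ and $\alpha\overline{\beta}+\overline{\alpha}\beta=0$ exactly as in the paper. Your explicit remark that the commutant structure must be verified for all powers $A^k$ (not just $A$) makes precise a step the paper leaves implicit, but the argument is the same.
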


\begin{proof}
 Suppose that $H(t)= \alpha I + \beta Q$ with $Q$ being a symmetric permutation matrix with zero diagonals. Then, for any $u\in G$, there exists some $v\in G\setminus\{u\}$ such that  $Q_{u,v} = 1$ and $Q_{u,w}=0$ for all $w\neq v$. It follows that   $H(t)_{u,u}=\alpha$, $H(t)_{u,v}=\beta$, and $H(t)_{u,w}=0$ for all $w\in G\setminus\{u,v\}$. Therefore,  we conclude that $(\alpha,\beta)$-revival occurs on $\Gamma$ from $u$ to $v$ at time $t$.

    Conversely, if $(\alpha,\beta)$-revival occurs on $\Gamma$ from some vertex $u$ to another vertex $v$, then $H(t) \mathbf{e}_{u}=\alpha \mathbf{e}_{u}+\beta \mathbf{e}_{v}$, that is,  $H(t)_{u,u}=\alpha$, $H(t)_{u,v}=\beta$, and $H(t)_{u,w}=0$ for all $w\in G\setminus\{u,v\}$. Since $\Gamma$ is a Cayley graph on $G$, the adjacency matrix $A=(a_{x,y})_{x,y\in G}$ satisfies the condition that $a_{xg,yg}=a_{x,y}$ for all $g\in G$. Thus, for any $g\in G$, we have $H(t)_{ug,ug}=\alpha$, $H(t)_{ug,vg}=\beta$, and $H(t)_{ug,wg}=0$ whenever $w\in G\setminus\{u,v\}$. By the arbitrariness of $g\in G$, we assert that $H(t)=\alpha I+\beta Q$, where $Q$ is a permutation matrix with zero diagonals. Moreover, since $H(t)$ is a symmetric unitary matrix, we have  $Q^T=Q$ and $H(t)H(t)^*=H(t)\overline{H(t)}=I$, that is, $\alpha\overline{\alpha}I+\beta\overline{\beta}Q^2+(\alpha\overline{\beta}+\overline{\alpha}\beta)Q=I$.   Combining this with the fact that $Q^2=I$ and $|\alpha|^2+|\beta|^2=1$, we obtain 
            $\alpha\overline{\beta}+\overline{\alpha}\beta=0$.
            
    We complete the proof.
\end{proof}

	Let $G$ be a finite group, and let $V$ be a $d$-dimensional vector space over the  complex field $\mathbb{C}$. A \textit{representation} of $G$ on $V$ is a group homomorphism $\rho: G \rightarrow GL(V)$, where $GL(V)$ is the general linear group consisting of invertible linear transformations on $V$. The dimension $d$ is referred to as the \textit{degree} of the representation $\rho$. The \textit{trivial representation} of $G$ is the homomorphism $\rho_0: G \rightarrow \mathbb{C}^*$ defined by letting $\rho_0(g) = 1$ for all $g \in G$, where $\mathbb{C}^*$ is the multiplicative group of $\mathbb{C}$.
	
	A subspace $W$ of $V$ is said to be $G$-\textit{invariant} if for every $g \in G$ and $w \in W$, we have $\rho(g)w \in W$. Clearly, both the zero subspace $\{0\}$ and the entire space $V$ are $G$-invariant subspaces. A representation $\rho: G \rightarrow GL(V)$ is called \textit{irreducible} if $\{0\}$ and $V$ are the only $G$-invariant subspaces of $V$.
	
Let  $\rho$ and $\tau$ be two representations of $G$ on vector spaces $V$ and $W$, respectively. We say that $\rho$ and $\tau$ are \textit{equivalent}, denoted by $\rho \sim \tau$, if there exists a linear isomorphism $f: V \rightarrow W$ such that $\tau(g) = f\rho(g)f^{-1}$ for all $g\in G$.

	The \textit{character} associated with a representation $\rho: G \rightarrow GL(V)$ is a function $\chi_\rho: G \rightarrow \mathbb{C}$ defined by letting $\chi_\rho(g) = \text{tr}(\rho(g))$ for all $g \in G$, where $\mathrm{tr}(\rho(g))$ is the trace of the representation matrix of $\rho(g)$ with respect to some basis of $V$. It is known that two representations of $G$ are equivalent if and only if they share the same character.  The \textit{degree} of the character $\chi_\rho$, denoted by $d_{\chi_\rho}$, is just the degree of the representation $\rho$. Clearly, $d_{\chi_{\rho}}=\chi_{\rho}(1)$. A character $\chi_\rho$ is called \textit{irreducible} if the representation it corresponds to, $\rho$, is itself irreducible. The set of all irreducible characters (resp., unitary representatives of equivalent classes of irreducible representations) of $G$ is denoted by $\widehat{G}$ (resp., $\mathrm{Irr}(G)$). For any $\chi\in \widehat{G}$, we use $\rho^\chi$ to denote the unitary irreducible representation  in $\mathrm{Irr}(G)$  with character  $\chi$.

\begin{lemma}\label{lem::2}
Let $G$ be a finite group, and let $a\in G\setminus\{1\}$. Then $a$ is an element of order $2$ that lies in the center of $G$  if and only if $\chi(a)=\pm d_\chi$ for all $\chi\in \widehat{G}$. In this case, we have 
$$\sum_{\chi\in \widehat{G}_0}d_\chi^2=\sum_{\chi\in \widehat{G}_1}d_\chi^2=\frac{|G|}{2},$$
where $\widehat{G}_0 = \{\chi\in \widehat{G}\mid \chi(a)=d_\chi\}$ and $\widehat{G}_1=\{\chi\in \widehat{G}\mid \chi(a)=-d_\chi\}$.
\end{lemma}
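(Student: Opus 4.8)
The plan is to prove both implications through Schur's lemma together with the standard equality analysis of $|\chi(g)|\le \chi(1)$, and then to read off the two sums from the orthogonality relations among irreducible characters.

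For the forward implication, suppose $a$ is a central element of order $2$. Fix $\chi\in\widehat{G}$ with unitary irreducible representation $\rho^\chi$. Since $a$ is central, $\rho^\chi(a)$ commutes with $\rho^\chi(g)$ for every $g\in G$, so Schur's lemma forces $\rho^\chi(a)=\lambda I$ for some scalar $\lambda$. From $a^2=1$ we get $\lambda^2=1$, hence $\lambda=\pm 1$ and $\chi(a)=\mathrm{tr}(\rho^\chi(a))=\pm d_\chi$.

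For the converse, I would use that every eigenvalue of $\rho^\chi(a)$ is a root of unity (as $a$ has finite order) and that $\rho^\chi(a)$ is diagonalizable. Writing $\chi(a)$ as the sum of these $d_\chi$ eigenvalues, each of modulus $1$, the equality $|\chi(a)|=d_\chi$ combined with the triangle inequality forces all the eigenvalues to coincide with a single value $\lambda$; the hypothesis $\chi(a)=\pm d_\chi$ then pins down $\lambda=\pm 1$, so $\rho^\chi(a)=\pm I$ for every $\chi\in\widehat{G}$. Centrality of $a$ follows because $\rho^\chi(a)$ being scalar commutes with every $\rho^\chi(g)$, so decomposing the faithful regular representation into irreducibles yields $\rho_{\mathrm{reg}}(a)\rho_{\mathrm{reg}}(g)=\rho_{\mathrm{reg}}(g)\rho_{\mathrm{reg}}(a)$, whence $ag=ga$ for all $g$ and $a\in Z(G)$. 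Likewise $a^2=1$, since $\rho^\chi(a^2)=\rho^\chi(a)^2=I$ for all $\chi$ and the only element in the kernel of every irreducible representation is the identity; as $a\neq 1$, the order of $a$ is exactly $2$.

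For the final equalities, observe that $\widehat{G}=\widehat{G}_0\sqcup\widehat{G}_1$ is a disjoint partition (since $d_\chi\neq 0$), so the degree identity $\sum_{\chi\in\widehat{G}}d_\chi^2=|G|$ gives $\sum_{\chi\in\widehat{G}_0}d_\chi^2+\sum_{\chi\in\widehat{G}_1}d_\chi^2=|G|$. Because $a\neq 1$, the element $a$ and the identity lie in distinct conjugacy classes, so the column orthogonality relation yields $\sum_{\chi\in\widehat{G}}\chi(a)\overline{\chi(1)}=0$; substituting $\chi(1)=d_\chi$ and $\chi(a)=\pm d_\chi$ turns this into $\sum_{\chi\in\widehat{G}_0}d_\chi^2-\sum_{\chi\in\widehat{G}_1}d_\chi^2=0$. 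Solving these two linear relations gives each sum equal to $|G|/2$. I expect the main obstacle to be the converse direction, specifically justifying cleanly that $|\chi(a)|=d_\chi$ for every irreducible $\chi$ upgrades to the genuine scalar identity $\rho^\chi(a)=\pm I$ (rather than a mere statement about eigenvalue moduli) and that scalar action across all irreducibles characterizes centrality; once these are in place, the arithmetic for the two sums is routine.
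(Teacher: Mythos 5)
Your proposal is correct, and its overall skeleton matches the paper's: Schur's lemma handles the direction where $a$ is assumed central of order $2$; the other direction starts, as in the paper, by noting the eigenvalues of $\rho^\chi(a)$ are roots of unity and that $|\chi(a)|=d_\chi$ forces (via the equality case of the triangle inequality) $\rho^\chi(a)=\pm I$; and the two sums follow, exactly as in the paper, from $\sum_{\chi}d_\chi^2=|G|$ together with the column orthogonality relation $\sum_{\chi}\chi(a)\overline{\chi(1)}=0$. Where you genuinely diverge is in how you upgrade ``$\rho^\chi(a)=\pm I$ for all $\chi$'' to ``$a$ is central of order $2$.'' The paper gets $a^2=1$ from the Fourier inversion formula and then gets centrality by a counting argument: plugging $|\chi(a)|^2=d_\chi^2$ into the second orthogonality relation for the class $C_a$ yields $|C_a|=1$. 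You instead exploit faithfulness of the regular representation twice: since each $\rho^\chi(a)$ is scalar, $\rho_{\mathrm{reg}}(a)$ commutes with every $\rho_{\mathrm{reg}}(g)$ in a basis adapted to the isotypic decomposition, so $ag=ga$ by injectivity of $\rho_{\mathrm{reg}}$; and since $a^2$ lies in the kernel of every irreducible representation, $a^2=1$. (These two kernel/faithfulness facts are really the same standard fact the paper's Fourier inversion encodes.) Both routes are sound; yours avoids any character-sum computation in the centrality step and is arguably more elementary, while the paper's orthogonality computation has the small bonus of exhibiting the class size $|C_a|=1$ explicitly, in the same style as the orthogonality arguments used elsewhere in the paper.
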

\begin{proof}
First assume that $\chi(a)=\pm d_\chi$ for all $\chi\in \widehat{G}$. Let $m$ be the exponent of $G$, i.e., the least common multiple of the orders of the elements in $G$.  Since $\rho^\chi(a)$ is a unitary matrix of order $d_\chi $ satisfying $(\rho^\chi(a))^m=\rho^\chi(a^m)= I$  and $\mathrm{tr}(\rho^\chi(a))=\chi(a)=\pm d_\chi$, we assert that the eigenvalues of  $\rho^\chi(a)$ are either all $1$ or all $-1$, that is,  $\rho_\chi(a) = \pm I$. Hence,  $\rho_\chi(a^2)=(\rho_\chi(a))^2= I$ for all $\chi\in\widehat{G}$. By the  Fourier inversion formula  (see, for example, \cite[Theorem 5.5.4]{Ste12}), we  immediately deduce that $a^2=1$. Therefore, $a$ is order $2$ and $|G|$ is even.  Let $C_{a}$ denote the conjugacy class of $G$ containing $a$. By the second orthogonality relation for irreducible characters of $G$,  we obtain 
    \begin{align*}
        1 &= \frac{1}{|G|}\sum_{\chi\in\widehat{G}}|C_{a}|\chi(a)\overline{\chi(a)}= \frac{1}{|G|}\sum_{\chi\in\widehat{G}}|C_{a}|d_\chi^2= |C_{a}|,
    \end{align*}
which implies that $a$ lies in the center of $G$. Conversely, assume that $a$ is an element of order $2$ that lies in the center of $G$. For any $\chi\in\widehat{G}$, we have $\rho^\chi(g)\rho^\chi(a)=\rho^\chi(ga)=\rho^\chi(ag)=\rho^\chi(a)\rho^\chi(g)$ whenever $g\in G$. By Schur's lemma, this  implies that $\rho^\chi(a)$ is a scalar matrix. On the other hand, $(\rho^\chi(a))^2=\rho^\chi(a^2)=\rho^\chi(1)=I$. Thus we obtain $\rho^\chi(a)=\pm I$, and it follows that $\chi(a)=\mathrm{tr}(\rho^\chi(a))=\pm d_\chi$, as desired. Moreover, again by the second orthogonality relation for irreducible characters of $G$, we have 
$$
0=\frac{1}{|G|}\sum_{\chi\in \widehat{G}}\chi(a)\overline{\chi(1)}=\frac{1}{|G|}\sum_{\chi\in \widehat{G}}\chi(a)d_\chi=\frac{1}{|G|}\left(\sum_{\chi\in \widehat{G}_0}d_\chi^2-\sum_{\chi\in \widehat{G}_1}d_\chi^2\right),
$$
and hence 
$$\sum_{\chi\in \widehat{G}_0}d_\chi^2 = \sum_{\chi\in \widehat{G}_1}d_\chi^2=\frac{1}{2}\sum_{\chi\in \widehat{G}}d_\chi^2=\frac{|G|}{2}.$$

This completes the proof.
\end{proof}

\section{Main results}\label{sec::3}
Suppose that   $\Gamma=\mathrm{Cay}(G,S)$ is a quasi-abelian Cayley graph. According to \cite{Z88}, the eigenvalues of the adjacency matrix $A$ of $\Gamma$ are given by 
\begin{equation*}
    \lambda_\chi=\frac1{d_\chi}\sum_{g\in S}\chi(g),~\chi\in\widehat{G},
\end{equation*}
and each $\lambda_\chi$ has multiplicity  $d_\chi^2$. Note that $\sum_{\chi\in\widehat{G}}d_{\chi}^2=|G|$. Moreover, the vectors
\begin{equation*}
    \boldsymbol{v}_{ij}^{\chi}=\frac{\sqrt{d_\chi}}{\sqrt{|G|}}\big(\rho_{ij}^{\chi}(g):g\in G \big)^T,~1\leq i,j\leq d_\chi,
\end{equation*}
form an orthonormal basis for the eigenspace with respect to  $\lambda_\chi$, where $\rho_{ij}^{\chi}(g)$  denotes the $(i,j)$-entry of  $\rho^\chi(g)$ (see, for example, \cite{WAF24}).

The spectral decomposition of the adjacency matrix $A$ of $\Gamma$ is 
$$A=\sum_{\chi\in\widehat{G}}\lambda_\chi E_\chi,$$
 where 
\begin{equation*}
    E_\chi= \sum_{i=1}^{d_\chi} \sum_{j=1}^{d_\chi} \boldsymbol{v}_{ij}^{\chi}\left(\boldsymbol{v}_{ij}^{\chi}\right)^*.
\end{equation*}
Then the transition matrix of $\Gamma$ can be written as
\begin{equation*}
    H(t)=\sum_{\chi\in\widehat{G}}\exp(\mathrm{i}t\lambda_\chi)E_\chi.
\end{equation*}
Note that, for any $u,v\in V(\Gamma)=G$, we have
\begin{equation*}
    (E_\chi)_{u,v} = \frac{d_\chi}{|G|} \sum_{i=1}^{d_\chi} \sum_{j=1}^{d_\chi} \rho_{ij}^{\chi}(u) \overline{\rho_{ij}^{\chi}(v)} = \frac{d_\chi}{|G|} \chi(uv^{-1}),
\end{equation*}
and consequently,
\begin{equation}\label{eq::1}
    H(t)_{u,v}= \sum_{\chi\in\widehat{G}}\exp(\mathrm{i}t\lambda_\chi)(E_\chi)_{u,v} = \frac{1}{|G|} \sum_{\chi\in\widehat{G}} d_\chi \chi(uv^{-1}) \exp(\mathrm{i}t\lambda_\chi).
\end{equation}

We are now ready to present the main result of this paper.
\begin{theorem}\label{thm::1}
    Let $\Gamma = \mathrm{Cay}(G,S)$ be a connected quasi-abelian Cayley graph. Suppose that $\alpha$ and $\beta$ are two nonzero complex numbers. Then $(\alpha,\beta)$-revival occurs on $\Gamma$ from a vertex $u$ to a vertex $v$ at time $t$ if and only if the following statements hold:
    \begin{enumerate}[(a)]
    \item $\chi(uv^{-1})=\pm d_\chi$ for all $\chi\in\widehat{G}$, or equivalently,  $|G|$ is even and $uv^{-1}$ is an element of order two that lies in the center of $G$;
    \item $
    \exp(\mathrm{i}t\lambda_\chi)=\alpha+\beta$ if $\chi\in \widehat{G}_0$ and $
    \exp(\mathrm{i}t\lambda_\chi)=\alpha-\beta$ if $\chi\in \widehat{G}_1$, where 
   $\widehat{G}_0 = \{\chi\in \widehat{G}\mid \chi(uv^{-1})=d_\chi\}$ and $\widehat{G}_1=\{\chi\in \widehat{G}\mid \chi(uv^{-1})=-d_\chi\}$.
    \end{enumerate}
   In particular, $\Gamma$ is an integral graph, i.e., $\lambda_\chi\in\mathbb{Z}$ for all  $\chi\in \widehat{G}$.
\end{theorem}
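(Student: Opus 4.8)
The plan is to route everything through Lemma~\ref{lem::1} and translate the matrix identity $H(t)=\alpha I+\beta Q$ into character sums via \eqref{eq::1}. Put $a=uv^{-1}$. By \eqref{eq::1} the entry $H(t)_{x,y}$ depends only on $xy^{-1}$, say $H(t)_{x,y}=f(xy^{-1})$ with $f(z)=\frac1{|G|}\sum_{\chi}d_\chi\chi(z)\exp(\mathrm{i}t\lambda_\chi)$, and $\alpha I+\beta Q$ has the same feature. If $(\alpha,\beta)$-revival occurs, Lemma~\ref{lem::1} supplies a symmetric, zero-diagonal permutation matrix $Q$ with $Q_{x,y}=g(xy^{-1})$ for some $\{0,1\}$-valued $g$; since each row of $Q$ carries a single $1$, $g$ is the indicator of a single element, forcing $Q_{x,y}=[xy^{-1}=a]$, and symmetry with zero diagonal gives $a^2=1$, $a\neq1$. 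Moreover $f$ is a class function (a $\mathbb{C}$-combination of the $\chi$) supported on $\{1,a\}$, so the conjugacy class of $a$ must be $\{a\}$; hence $a$ is central of order two, which by Lemma~\ref{lem::2} is exactly statement (a) and also shows $|G|$ is even with $\widehat{G}_0,\widehat{G}_1\neq\varnothing$.

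For (b) I would compare the two forms of $f$. Reading $f(1)=\alpha$ and $f(a)=\beta$ off \eqref{eq::1}, using $\chi(1)=d_\chi$ and $\chi(a)=\pm d_\chi$, and adding/subtracting gives
\[
\frac{2}{|G|}\sum_{\chi\in\widehat{G}_0}d_\chi^2\exp(\mathrm{i}t\lambda_\chi)=\alpha+\beta,\qquad
\frac{2}{|G|}\sum_{\chi\in\widehat{G}_1}d_\chi^2\exp(\mathrm{i}t\lambda_\chi)=\alpha-\beta .
\]
By Lemma~\ref{lem::2} the weights $\tfrac{2d_\chi^2}{|G|}$ sum to $1$ on each of $\widehat{G}_0,\widehat{G}_1$, so each right-hand side is a convex combination of the unit-modulus numbers $\exp(\mathrm{i}t\lambda_\chi)$; and from Lemma~\ref{lem::1} ($|\alpha|^2+|\beta|^2=1$, $\alpha\overline\beta+\overline\alpha\beta=0$) one gets $|\alpha\pm\beta|=1$. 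Equality in the triangle inequality then forces every summand to equal the common value, which is statement (b). This convexity/equality step is the delicate point of the forward direction. The converse is the same computation run backwards: assuming (a) and (b), substitute $\exp(\mathrm{i}t\lambda_\chi)=\alpha\pm\beta$ into \eqref{eq::1}, then use $\frac1{|G|}\sum_\chi d_\chi\chi(w)=[w=1]$ together with $\chi(az)=\pm\chi(z)$ (valid since $\rho^\chi(a)=\pm I$) to collapse the signed sum into $\frac1{|G|}\sum_\chi d_\chi\chi(az)=[z=a]$, yielding $H(t)_{x,y}=\alpha[x=y]+\beta[xy^{-1}=a]$; since (b) again forces $|\alpha\pm\beta|=1$ and hence the scalar conditions of Lemma~\ref{lem::1}, revival follows.

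Finally, the integrality. Let $\Lambda_0$ (resp. $\Lambda_1$) be the set of distinct $\lambda_\chi$ with $\chi\in\widehat{G}_0$ (resp. $\widehat{G}_1$); by (b) these are disjoint and partition the spectrum, and $Q=F_0-F_1$ where $F_0=\sum_{\theta\in\Lambda_0}F_\theta$, $F_1=\sum_{\theta\in\Lambda_1}F_\theta$ are genuine spectral idempotents of $A$. Since $A$ is an integer matrix, its eigenvalues are totally real algebraic integers permuted by $\mathrm{Gal}(\mathbb{K}/\mathbb{Q})$ (with $\mathbb{K}$ the splitting field), and because $Q=2F_0-I$ is rational, every Galois automorphism preserves $\Lambda_0$ and $\Lambda_1$ setwise. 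The valency $d=|S|=\lambda_{\rho_0}$ lies in $\Lambda_0$ and, as $\Gamma$ is connected and regular, is a \emph{simple} integral top eigenvalue. Within $\Lambda_0$, statement (b) gives $\exp(\mathrm{i}t(d-\theta))=1$, so $d-\theta\in\frac{2\pi}{t}\mathbb{Z}$ and hence $\Lambda_0\subseteq d+\mathbb{Q}\gamma$ for a fixed $\gamma=d-\theta_0$ with $\theta_0\in\Lambda_0\setminus\{d\}$. Galois-stability of $\Lambda_0$ then makes $\sigma\mapsto\sigma(\gamma)/\gamma$ a homomorphism $\mathrm{Gal}(\mathbb{K}/\mathbb{Q})\to\mathbb{Q}^\times$ with finite image, so $\sigma(\gamma)=\pm\gamma$; the choice $-\gamma$ would produce the eigenvalue $d+\gamma>d$, contradicting maximality, whence $\gamma\in\mathbb{Q}$ and thus $\gamma\in\mathbb{Z}$. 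This makes $\frac{2\pi}{t}$ rational and $\Lambda_0\subseteq\mathbb{Z}$; the conjugates of any $\theta\in\Lambda_1$ then differ from it by rationals, and a short trace argument gives $\Lambda_1\subseteq\mathbb{Z}$ as well. I expect this to be the main obstacle: what must be excluded is the quadratic-irrational eigenvalue family that bare periodicity would permit, and it is precisely the simplicity and integrality of the Perron eigenvalue $d$, together with Galois-stability of the two classes, that rules it out. The degenerate case $|\Lambda_0|=1$ forces $|G|=2$, i.e. $\Gamma=K_2$, which is integral outright.
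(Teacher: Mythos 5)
Your proof is correct, but it diverges from the paper's at two key points, so a comparison is worthwhile. For (a) and (b) the paper argues pointwise through characters: orthogonality applied to $f$ yields the identity \eqref{eq::2}, $\exp(\mathrm{i}t\lambda_\chi)=\alpha+\frac{\overline{\chi(uv^{-1})}}{d_\chi}\beta$ for every $\chi\in\widehat{G}$; then $H(t)_{u,v}=\beta\neq 0$ forces $\sum_{\chi}|\chi(uv^{-1})|^2=|G|$, hence $|\chi(uv^{-1})|=d_\chi$, and the scalar relation $\alpha\overline{\beta}+\overline{\alpha}\beta=0$ from Lemma~\ref{lem::1} forces $\chi(uv^{-1})$ to be real; statement (b) then falls out of \eqref{eq::2} with no further work. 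You instead get (a) structurally: $f$ is a class function supported on $\{1,a\}$ with $f(a)=\beta\neq 0$, so the conjugacy class of $a$ is $\{a\}$, and symmetry of $Q$ gives $a^2=1$; you then recover (b) from only the two values $f(1)=\alpha$ and $f(a)=\beta$ by strict convexity of the unit disc, using $|\alpha\pm\beta|=1$ (which, as you note, follows from the conditions in Lemma~\ref{lem::1}). Both routes are sound; your class-function observation is a slicker way to reach centrality, while the paper's pointwise identity makes (b) automatic and avoids the convexity step altogether. The more substantial divergence is integrality: the paper stays elementary, writing $\alpha,\beta$ in polar form, deriving the congruences \eqref{eq::3}, and using $\mathrm{tr}(A)=0$ together with the explicit evaluation $\sum_{\chi\in\widehat{G}_0}d_\chi^2\lambda_\chi\in\{0,|G|/2\}$ to bootstrap $\mu_\beta$, $T$, $\mu_0$ into $\mathbb{Q}$ and hence all $\lambda_\chi$ into $\mathbb{Z}$; you instead invoke Galois stability of $\Lambda_0,\Lambda_1$ (from rationality of $Q=2F_0-I$), the finite-image homomorphism $\sigma\mapsto\sigma(\gamma)/\gamma$ into $\mathbb{Q}^{\times}$, and maximality/simplicity of the Perron eigenvalue $|S|$ to exclude $\sigma(\gamma)=-\gamma$, finishing $\Lambda_1$ by a trace argument and handling the degenerate case $|\Lambda_0|=1$ separately (correctly: it forces $|G|=2$). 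Your Galois argument is valid and closer in spirit to the standard ratio-condition arguments for perfect state transfer; it buys conceptual transparency and independence from the paper's specific character-sum computations, at the cost of machinery the paper deliberately avoids, whereas the paper's computation is longer but entirely self-contained. Your converse is essentially the paper's computation, with the minor streamlining $\chi(az)=\pm\chi(z)$.
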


\begin{proof}
 Suppose that $(\alpha,\beta)$-revival occurs on $\Gamma$ from a vertex $u$ to a vertex $v$ at time $t$. Then $|\alpha|^2+|\beta|^2=1$.  According to \eqref{eq::1}, for any $w\in G$,  we have
    \begin{align*}
        H(t)_{u,w} = \frac{1}{|G|} \sum_{\chi\in\widehat{G}} d_\chi \chi(uw^{-1})\exp(\mathrm{i}t\lambda_\chi) =
         \begin{cases}
            \alpha, & \mbox{if $w=u$},\\
            \beta, & \mbox{if $w=v$},\\
            0, &\mbox{otherwise}.
        \end{cases}
    \end{align*}
 Let $f$ be the complex function on $G$ defined by letting 
    \begin{align*}
        f(g)= \frac{1}{|G|}\sum_{\varphi\in\widehat{G}} d_\varphi \varphi(g)\exp(\mathrm{i}t\lambda_\varphi)~\mbox{for}~g\in G. 
    \end{align*}
Then we see that   
    $$
    f(g)=\begin{cases}
            \alpha, &\mbox{if $g=1$},\\
            \beta, &\mbox{if $g=uv^{-1}$},\\
            0, &\mbox{otherwise}.
        \end{cases}
      $$
For any irreducible character $\chi\in\widehat{G}$, 
    \begin{align*}
        \sum_{g\in G} f(g)\overline{\chi(g)}& = \sum_{\varphi \in \widehat{G}}d_\varphi\exp(\mathrm{i}t\lambda_{\varphi}) \cdot \frac{1}{|G|} \sum_{g\in G} \varphi(g)\overline{\chi(g)} \\
        &= \sum_{\chi \in \widehat{G}}d_\varphi\exp(\mathrm{i}t\lambda_{\varphi})\delta_{\varphi,\chi}\\
        &=d_\chi\exp(\mathrm{i}t\lambda_{\chi}),
    \end{align*}
   where $\delta_{\varphi,\chi}=1$ if $\varphi=\chi$, and $\delta_{\varphi,\chi}=0$ otherwise. On the other hand, 
   $$
   \sum_{g\in G} f(g)\overline{\chi(g)}=\alpha \overline{\chi(1)} + \beta \overline{\chi(uv^{-1})}=\alpha d_\chi + \beta \overline{\chi(uv^{-1})}.
   $$
 Thus we have
    \begin{equation}\label{eq::2}
        \exp(\mathrm{i}t\lambda_\chi) = \alpha + \frac{\overline{\chi(uv^{-1})}}{d_\chi}\beta,
    \end{equation}
and it follows that
    \begin{align*}
        H(t)_{u,v} &= \frac{1}{|G|} \sum_{\chi\in\widehat{G}} d_\chi \chi(uv^{-1}) \exp(\mathrm{i}t\lambda_\chi)\\
        & = \frac{1}{|G|} \sum_{\chi\in\widehat{G}}d_\chi \chi(uv^{-1})\left(\alpha+\frac{\overline{\chi(uv^{-1})}}{d_\chi}\beta\right)\\
        &= \frac{1}{|G|}\left(\alpha\sum_{\chi\in\widehat{G}}\chi(1) \chi(uv^{-1})+\beta\sum_{\chi\in\widehat{G}} |\chi(uv^{-1})|^2\right)\\
        &= \frac{\beta}{|G|}\sum_{\chi\in\widehat{G}} |\chi(uv^{-1})|^2.
    \end{align*}
   Combining this with the fact that $H(t)_{u,v}=\beta\neq0$, we obtain
    \begin{equation*}
        \sum_{\chi\in\widehat{G}} |\chi(uv^{-1})|^2 = |G|.
    \end{equation*}
 Note that $|\chi(uv^{-1})|\leq \chi(1)=d_\chi$ for all $\chi\in\widehat{G}$. Then we have
    \begin{equation*}
        |G| = \sum_{\chi\in\widehat{G}}|\chi(uv^{-1})|^2 \leq \sum_{\chi\in\widehat{G}}d_\chi^2=|G|,
    \end{equation*}
and hence  $|\chi(uv^{-1})|=d_\chi$ for all $\chi\in\widehat{G}$. Furthermore, from  \eqref{eq::2} we  can deduce that
    \begin{align*}
        1 &= \left(\alpha+\frac{\overline{\chi(uv^{-1})}}{d_\chi}\beta\right)\overline{\left(\alpha+\frac{\overline{\chi(uv^{-1})}}{d_\chi}\beta\right)}\\
        &= |\alpha|^2 + \frac{|\chi(uv^{-1})|^2}{d_\chi^2}|\beta|^2 + \frac{\overline{\chi(uv^{-1})}}{d_\chi}\overline{\alpha}\beta + \frac{\chi(uv^{-1})}{d_\chi}\alpha\overline{\beta}\\
        &= |\alpha|^2+|\beta|^2+ \frac{\overline{\chi(uv^{-1})}}{d_\chi}\overline{\alpha}\beta + \frac{\chi(uv^{-1})}{d_\chi}\alpha\overline{\beta}\\
        &=1+ \frac{\overline{\chi(uv^{-1})}}{d_\chi}\overline{\alpha}\beta + \frac{\chi(uv^{-1})}{d_\chi}\alpha\overline{\beta},
    \end{align*}
   or equivalently,  
    \begin{equation*}
        \frac{\overline{\chi(uv^{-1})}}{d_\chi}\overline{\alpha}\beta + \frac{\chi(uv^{-1})}{d_\chi}\alpha\overline{\beta} = 0.
    \end{equation*}
Combining this with the fact that $\alpha,\beta\neq 0$ and  $\overline{\alpha}\beta + \alpha\overline{\beta} = 0$ (by Lemma \ref{lem::1}), we obtain $\chi(uv^{-1})=\overline{\chi(uv^{-1})}\in \mathbb{R}$. Therefore, we conclude that $\chi(uv^{-1}) = \pm d_\chi$ for all $\chi\in \widehat{G}$. According to Lemma  \ref{lem::2}, this is equivalent to the condition that  $|G|$ is even and $uv^{-1}$ is an element of order two in the center of $G$.  Thus  (a) follows.

Let $\widehat{G}_0=\{\chi\in \widehat{G}\mid \chi(uv^{-1})=d_\chi\}$ and $\widehat{G}_1=\{\chi\in \widehat{G}\mid \chi(uv^{-1})=-d_\chi\}$. By the above argument, we have  $\widehat{G}=\widehat{G}_0\cup \widehat{G}_1$. Then it follows from \eqref{eq::2} that 
$$
    \exp(\mathrm{i}t\lambda_\chi)=
    \begin{cases}
    \alpha+\beta, &\mbox{if $\chi\in \widehat{G}_0$},\\
    \alpha-\beta, &\mbox{if $\chi\in \widehat{G}_1$}.
    \end{cases}
    $$
Furthermore, by Lemma \ref{lem::2}, we obtain  
$$\sum_{\chi\in \widehat{G}_0}d_\chi^2 = \sum_{\chi\in \widehat{G}_1}d_\chi^2=\frac{1}{2}\sum_{\chi\in \widehat{G}}d_\chi^2=\frac{|G|}{2}.$$
This proves (b).

 Now suppose that $\alpha = r_\alpha\exp(\mathrm{i}\theta_\alpha)$ and $\beta = r_\beta\exp(\mathrm{i}\theta_\beta)$, where $\theta_\alpha,\theta_\beta\in \mathbb{R}$. Then from $|\alpha|^2 + |\beta|^2 = 1$ and $\overline{\alpha}\beta + \alpha\overline{\beta} = 0$ we obtain $r_\alpha^2 + r_\beta^2 = 1$ and $\theta_\alpha - \theta_\beta = \frac{\pi}{2} + k\pi$ for some $k\in \mathbb{Z}$. Hence, $\alpha \pm \beta = \exp(\mathrm{i}\theta_\beta)((-1)^k r_{\alpha}\mathrm{i} \pm r_\beta)$.
  Write $r_\beta + (-1)^k r_{\alpha}\mathrm{i} = \exp(\mathrm{i}\theta_0)$ with $\theta_0\in \mathbb{R}$. Then we see that 
    \begin{equation*}
        \alpha + \beta = \exp(\mathrm{i}(\theta_0 + \theta_\beta)) ~\mbox{and}~\alpha - \beta = \exp(\mathrm{i}(\pi - \theta_0 + \theta_\beta)).
    \end{equation*}
    Write $t = 2\pi T$, $\theta_0 = 2\pi \mu_0$ and $\theta_\beta = 2\pi \mu_\beta$. According to (b), we immediately deduce that 
    \begin{align}\label{eq::3}
        \begin{cases}
            T\lambda_\chi - \mu_0 - \mu_\beta \in \mathbb{Z},& \mbox{if $\chi\in \widehat{G}_0$,}\\
            T\lambda_\chi + \mu_0 - \mu_\beta -\frac{1}{2}  \in \mathbb{Z},& \mbox{if  $\chi\in \widehat{G}_1$.}
        \end{cases}
    \end{align}
  Since $\sum_{\chi\in\widehat{G}}d_\chi^2\lambda_\chi = \mathrm{tr}(A)=0$ and $\sum_{\chi\in \widehat{G}_0}d_\chi^2 = \sum_{\chi\in \widehat{G}_1}d_\chi^2=\frac{1}{2}\sum_{\chi\in \widehat{G}}d_\chi^2=\frac{|G|}{2}$, it follows from \eqref{eq::3} that
    \begin{align*}
      &~~~~\sum_{\chi\in\widehat{G}_0}d_{\chi}^2\left(T\lambda_\chi - \mu_0 - \mu_\beta\right) +\sum_{\chi\in\widehat{G}_1}d_{\chi}^2\left(T\lambda_\chi +\mu_0 - \mu_\beta-\frac{1}{2}\right) \\
      &=T\sum_{\chi\in\widehat{G}}d_\chi^2\lambda_\chi -\mu_0\left(\sum_{\chi\in\widehat{G}_0}d_\chi^2-\sum_{\chi\in\widehat{G}_1}d_\chi^2\right)-\mu_\beta \sum_{\chi\in\widehat{G}}d_\chi^2-\frac{1}{2}\sum_{\chi\in\widehat{G}_1}d_\chi^2\\
      &=- \mu_\beta|G| -\frac{|G|}{4} \in \mathbb{Z}.
      \end{align*}
Hence, $\mu_\beta \in \mathbb{Q}$. Also note that
    \begin{align*}
        \sum_{\chi\in \widehat{G}_0}d_\chi^2\lambda_\chi &=\sum_{\chi\in \widehat{G}_0}d_\chi^2\cdot \frac{1}{d_\chi}\sum_{s\in S}\chi(s) = \sum_{\chi\in \widehat{G}_0}\sum_{s\in S}d_\chi\chi(s) = \sum_{s\in S}\sum_{\chi\in \widehat{G}_0}d_\chi\chi(s)\\
        &= \sum_{s\in S}\sum_{\chi\in\widehat{G}}\frac{d_\chi + \chi(uv^{-1})}{2}\chi(s)\\
        &= \frac{1}{2}\left(\sum_{s\in S}\sum_{\chi\in\widehat{G}}\chi(s)\overline{\chi(1)} + \sum_{s\in S}\sum_{\chi\in\widehat{G}}\chi(s)\overline{\chi(uv^{-1})}\right)\\
        &= \frac{1}{2} \sum_{s\in S}\sum_{\chi\in\widehat{G}}\chi(s)\overline{\chi(uv^{-1})}\\
        &= \begin{cases}
            \frac{|G|}{2}, &\mbox{if $C_{uv^{-1}}=\{uv^{-1}\}\subseteq S$,}\\
            0, &\mbox{otherwise.}\\
        \end{cases}
    \end{align*}
 As $|G|$ is even, we have $\sum_{\chi\in \widehat{G}_0}d_\chi^2\lambda_\chi\in \mathbb{Z}$,  and so $\sum_{\chi\in \widehat{G}_1}d_\chi^2\lambda_\chi\in  \mathbb{Z}$ due to $\sum_{\chi\in\widehat{G}_0}d_\chi^2\lambda_\chi+\sum_{\chi\in\widehat{G}_1}d_\chi^2\lambda_\chi=\sum_{\chi\in\widehat{G}}d_\chi^2\lambda_\chi =0$.  Let $\chi_0\in \widehat{G}$ denote the character corresponding to the trivial representation of $G$. Clearly, $\chi_0\in\widehat{G_0}$ and $\lambda_{\chi_0}=|S|$. According to \eqref{eq::3}, we have 
\begin{equation}\label{eq::4}
        T|S| - \mu_0 - \mu_\beta \in \mathbb{Z}
    \end{equation}
 and
  \begin{align*}
      &~~~~\sum_{\chi\in\widehat{G}_0}d_{\chi}^2\left(T|S| - \mu_0 - \mu_\beta\right) +\sum_{\chi\in\widehat{G}_1}d_{\chi}^2\left(T\lambda_\chi +\mu_0 - \mu_\beta-\frac{1}{2}\right) \\
      &=T\left(|S|\cdot\sum_{\chi\in\widehat{G}_0}d_\chi^2+\sum_{\chi\in \widehat{G}_1}d_\chi^2\lambda_\chi\right) -\mu_0\left(\sum_{\chi\in\widehat{G}_0}d_\chi^2-\sum_{\chi\in\widehat{G}_1}d_\chi^2\right)-\mu_\beta \sum_{\chi\in\widehat{G}}d_\chi^2-\frac{1}{2}\sum_{\chi\in\widehat{G}_1}d_\chi^2\\
      &=T\left(|S|\cdot \frac{|G|}{2} + \sum_{\chi\in \widehat{G}_1}d_\chi^2\lambda_\chi\right)- \mu_\beta|G| -\frac{|G|}{4} \in \mathbb{Z}.
   \end{align*}
This implies that  $T\in \mathbb{Q}$  due to $\sum_{\chi\in \widehat{G}_1}d_\chi^2\lambda_\chi\in  \mathbb{Z}$ and $\mu_\beta\in \mathbb{Q}$. Then it follows from \eqref{eq::4} that $\mu_0\in \mathbb{Q}$. Again by \eqref{eq::3}, we see that  all the eigenvalues $\lambda_\chi$ with $\chi \in \widehat{G}$ are rational numbers, and so must be integers because they are  algebraic integers. Therefore, $\Gamma$ is an integral graph.

Conversely, suppose that $u,v\in V(\Gamma)=G$ are two vertices  satisfying  the conditions in  (a) and (b). Note that  $\sum_{\chi\in \widehat{G}_0}d_\chi^2=\sum_{\chi\in \widehat{G}_1}d_\chi^2=\frac{|G|}{2}$ by Lemma \ref{lem::2}. Then from \eqref{eq::1} we obtain
    \begin{align*}
        H(t)_{u,u} &= \frac{1}{|G|} \sum_{\chi\in\widehat{G}} d_\chi \chi(uu^{-1})\exp(\mathrm{i}t\lambda_\chi)\\
        &= \frac{1}{|G|} \sum_{\chi\in\widehat{G}}d_\chi^2\exp(\mathrm{i}t\lambda_\chi)\\
        &= \frac{1}{|G|} \left(\sum_{\chi\in \widehat{G}_0}d_\chi^2(\alpha+\beta) + \sum_{\chi\in \widehat{G}_1}d_\chi^2(\alpha-\beta)\right)\\
        &= \frac{1}{|G|} \left(\alpha\sum_{\chi\in\widehat{G}}d_\chi^2 + \beta\left(\sum_{\chi\in \widehat{G}_0}d_\chi^2 - \sum_{\chi\in \widehat{G}_1}d_\chi^2\right)\right)\\
        &= \alpha
    \end{align*}
    and 
    \begin{align*}
        H(t)_{u,v} &= \frac{1}{|G|} \sum_{\chi\in\widehat{G}} d_\chi \chi(uv^{-1})\exp(\mathrm{i}t\lambda_\chi)\\
        &= \frac{1}{|G|} \left(\sum_{\chi\in \widehat{G}_0}d_\chi^2(\alpha+\beta) - \sum_{\chi\in \widehat{G}_1}d_\chi^2(\alpha-\beta)\right)\\
                &= \frac{1}{|G|} \left(\alpha\left(\sum_{\chi\in \widehat{G}_0}d_\chi^2-\sum_{\chi\in \widehat{G}_1}d_\chi^2\right)+\beta\left(\sum_{\chi\in \widehat{G}_0}d_\chi^2+\sum_{\chi\in \widehat{G}_1}d_\chi^2\right)\right)\\
                &= \beta.
    \end{align*}
  Moreover, for any $w\in G\setminus\{u,v\}$, we have
    \begin{align*}
        H(t)_{u,w} &= \frac{1}{|G|} \sum_{\chi\in\widehat{G}} d_\chi \chi(uw^{-1})\exp(\mathrm{i}t\lambda_\chi)\\
        &= \frac{1}{|G|}\left(\sum_{\chi\in \widehat{G}_0}d_\chi\chi(uw^{-1})(\alpha+\beta) + \sum_{\chi\in \widehat{G}_1}d_\chi\chi(uw^{-1})(\alpha-\beta)\right)\\
        &= \frac{1}{|G|} \left(\alpha\sum_{\chi\in\widehat{G}}\chi(uw^{-1})\overline{\chi(1)} + \beta\left(\sum_{\chi\in \widehat{G}_0}d_\chi\chi(uw^{-1}) - \sum_{\chi\in \widehat{G}_1}d_\chi\chi(uw^{-1})\right)\right)\\
        &= \frac{\beta}{|G|}  \left(\sum_{\chi\in \widehat{G}_0}\chi(uw^{-1})\chi(uv^{-1}) + \sum_{\chi\in \widehat{G}_1}\chi(uw^{-1})\chi(uv^{-1})\right)\\
        &= \frac{\beta}{|G|} \sum_{\chi\in\widehat{G}} \chi(uw^{-1})\overline{\chi(uv^{-1})}\\
        &= 0,
    \end{align*}
   where the last equality follows from the fact that $uw^{-1}\neq uv^{-1}$ and $|C_{uv^{-1}}|=1$. As $H(t)$ is unitary, we also have $|\alpha|^2+|\beta|^2=1$. 
    Therefore, we conclude that $(\alpha,\beta)$-revival occurs on $\Gamma$ from  $u$ to  $v$ at time $t$.
    
    We complete the proof.
\end{proof}

According to Theorem \ref{thm::1}, if $G$ is a group of odd order or containing no elements of order two that lies in the center, then every quasi-abelian Cayley graph over $G$ admits no fractional revival. Thus we have the following result.

\begin{corollary}\label{cor::0}
    Let $S_n$ ($n\geq 2$) be the symmetric group  on $[n]$, and let $\Gamma = \mathrm{Cay}(S_n,S)$ be a connected quasi-abelian Cayley graph over $G$. Then $\Gamma$ admits fractional revival if and only if $n=2$.
\end{corollary}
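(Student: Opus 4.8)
The plan is to apply Theorem~\ref{thm::1}, whose condition~(a) is the decisive constraint: if a connected quasi-abelian Cayley graph over a group $G$ admits fractional revival between $u$ and $v$, then $uv^{-1}$ must be an element of order two lying in the center $Z(G)$. Consequently the whole problem reduces, for $G=S_n$, to deciding whether $Z(S_n)$ contains an involution, and I would organize the proof around the dichotomy $n\geq 3$ versus $n=2$.

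For $n\geq 3$ I would first establish the classical fact that $Z(S_n)$ is trivial. A self-contained argument suffices: if $\sigma\in Z(S_n)\setminus\{e\}$, choose $a$ with $\sigma(a)=b\neq a$; since $n\geq 3$ there is a third point $c\notin\{a,b\}$, and conjugating by the transposition $\tau=(b\,c)$ gives $(\tau\sigma\tau^{-1})(a)=c\neq b=\sigma(a)$, contradicting the assumption that $\sigma$ is central. Hence $Z(S_n)=\{e\}$ contains no element of order two, so condition~(a) of Theorem~\ref{thm::1} fails. Equivalently, by the observation preceding this corollary (a group with no central involution admits no fractional revival on any of its quasi-abelian Cayley graphs), no such $\Gamma$ over $S_n$ admits fractional revival when $n\geq 3$.

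For $n=2$ I would argue in the other direction by exhibiting fractional revival explicitly. Here $S_2\cong\mathbb{Z}_2$ is abelian, so $Z(S_2)=S_2$ and the transposition $(1\,2)$ is a central involution; the only connected Cayley graph is $K_2=\mathrm{Cay}(S_2,\{(1\,2)\})$. Its transition matrix is
\[
H(t)=\begin{pmatrix} \cos t & \mathrm{i}\sin t \\ \mathrm{i}\sin t & \cos t \end{pmatrix},
\]
so $(\alpha,\beta)$-revival with $\alpha=\cos t$ and $\beta=\mathrm{i}\sin t$ occurs for every $t$ with $\sin t\neq 0$. Combining the two cases gives the stated equivalence.

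I do not expect a substantive obstacle here: once Theorem~\ref{thm::1} is available, the entire content of the ``only if'' direction is the triviality of $Z(S_n)$ for $n\geq 3$, which is elementary, while the ``if'' direction is a one-line verification on $K_2$. The only care needed is to phrase the $n\geq 3$ case so that it rules out \emph{all} connected quasi-abelian connection sets at once, which the center computation does automatically since condition~(a) is independent of the choice of $S$.
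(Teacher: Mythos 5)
Your proposal is correct and follows essentially the same route as the paper: both directions reduce to Theorem~\ref{thm::1}(a), with the $n\geq 3$ case killed by the triviality of $Z(S_n)$ and the $n=2$ case handled by the explicit $(\cos t,\mathrm{i}\sin t)$-revival on $K_2$ for $t\notin\{k\pi\mid k\in\mathbb{Z}\}$. The only difference is that you supply a short proof that $Z(S_n)$ is trivial, which the paper simply asserts.
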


\begin{proof}
If $n=2$, then $S_2=\{(1),(12)\}$ and $\Gamma = \mathrm{Cay}(S_2,\{(12)\})\cong K_2$. It is easy to see that $(\cos(t),\mathrm{i}\sin(t))$-revival occurs on $\Gamma$ from $(1)$ to $(12)$ at  time $t\notin \{k\pi\mid k\in \mathbb{Z}\}$. Now suppose $n\geq 3$, and assume that $(\alpha,\beta)$-revival occurs on $\Gamma$ from a vertex $u$ to a vertex $v$ at time $t$. By Theorem \ref{thm::1}, we see that $uv^{-1}$ is an element of order $2$ that lies in the center of $S_n$. However, this is impossible because the identity $(1)$ is the unique element in the center of  $S_n$ due to $n\geq 3$. Thus the result follows.
\end{proof}

Now suppose that $G$ is an abelian group. For convenience, we express $G$ as
	\begin{equation*}
		G = \mathbb{Z}_{n_1}\oplus\mathbb{Z}_{n_2}\oplus\cdots\oplus\mathbb{Z}_{n_r},
	\end{equation*}
	where  $n_i$ is a power of some prime number for $1 \leq i \leq r$. Then every element $g \in G$ can be uniquely represented as a tuple $g = (g_1, g_2, \ldots, g_r)$, with each $g_i \in \mathbb{Z}_{n_i}$ for $1 \leq i \leq r$. For an element $g \in G$, let  $\chi_g$ be the function from $G$ to  $\mathbb{C}$ defined by letting
	\begin{equation*}
		\chi_g(x) = \prod_{i=1}^r \zeta_{n_i}^{g_i x_i},~\text{for all } x=(x_1,x_2,\ldots,x_r) \in G,
	\end{equation*}
	where $\zeta_{n_i}$ denotes a primitive $n_i$-th root of unity. Then one can verify that $\chi_g$ is  a group homomorphism from $G$ to $\mathbb{C}^*$, and so is  a representation of $G$ with degree $1$. Furthermore, it is well known that $\chi_g$, $g \in G$, form a complete set of  irreducible representations (characters) of $G$, and  $\widehat{G} = \{\chi_g \mid g \in G\}$. Also, by Lemma \ref{lem::2},  we have $\chi_g(x)=\pm 1$ for all $g\in G$ if and only if $x^2=1$. In the case that $x$ is of order $2$, we define 
\begin{equation}\label{eq::5}
G_0=\{g\in G: \chi_g(x)=1\}~\mbox{and}~G_1=\{g\in G: \chi_g(x)=-1\}.
\end{equation} 
Again by Lemma \ref{lem::2},  $|G_0|=|G_1|=|G|/2$. 
Therefore, by Theorem \ref{thm::1}, we immediately deduce the following result due to Cao and Luo \cite{CL22}.

\begin{corollary}(Cao and Luo \cite{CL22}\label{cor::1})
    Let $G$ be an abelian group of even order, and let  $\Gamma={\rm Cay}(G,S)$ be a connected Cayley graph over $G$. Suppose that $\alpha$ and $\beta$ are two nonzero complex numbers. Then  $(\alpha,\beta)$-revival occurs on $\Gamma$ from a vertex $x$ to a vertex $y$ at some time $t$ if and only if the following conditions hold:
    
    \begin{enumerate}[(1)]
   \item $x-y$ is of order two;

\item $\Gamma$ is an integral graph;

\item for every $g\in G_0$, $\exp(\mathrm{i} t \lambda_{\chi_g})=\alpha+\beta$, and for every $g\in G_1$, $\exp(\mathrm{i} t \lambda_{\chi_g})=\alpha-\beta$,
where $G_0$ and $G_1$ are defined by \eqref{eq::5}. 
    \end{enumerate}
\end{corollary}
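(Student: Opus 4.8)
The plan is to obtain this corollary as a direct specialization of Theorem \ref{thm::1} to the abelian setting, with essentially all of the work being a translation of notation. First I would observe that when $G$ is abelian every conjugacy class is a singleton, so the connection set $S$ is automatically a union of conjugacy classes; hence $\Gamma=\mathrm{Cay}(G,S)$ is quasi-abelian and Theorem \ref{thm::1} applies. Moreover, since $G$ is abelian, every irreducible character has degree one, i.e. $d_\chi=1$ for all $\chi\in\widehat{G}$, and $\widehat{G}=\{\chi_g\mid g\in G\}$ with the $\chi_g$ as defined above.

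Writing $G$ additively, the element $uv^{-1}$ of Theorem \ref{thm::1} becomes $x-y$. Condition (a) of Theorem \ref{thm::1}, namely $\chi(uv^{-1})=\pm d_\chi$ for all $\chi\in\widehat{G}$, then reads $\chi_g(x-y)=\pm 1$ for all $g\in G$; by Lemma \ref{lem::2} (with the center of $G$ equal to all of $G$) this is equivalent to $x-y$ being an element of order two, which is precisely condition (1). Next, under the bijection $\chi_g\leftrightarrow g$, and taking the order-two element in \eqref{eq::5} to be $x-y$, the index sets $\widehat{G}_0$ and $\widehat{G}_1$ appearing in Theorem \ref{thm::1} correspond exactly to $G_0$ and $G_1$. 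Consequently, condition (b) of Theorem \ref{thm::1} becomes verbatim condition (3).

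For the forward implication, if $(\alpha,\beta)$-revival occurs then Theorem \ref{thm::1} yields (a) and (b), which translate to (1) and (3), while the ``in particular'' clause of Theorem \ref{thm::1} supplies the integrality assertion (2). For the converse, conditions (1) and (3) are nothing but (a) and (b) in the abelian case, so the ``if'' direction of Theorem \ref{thm::1} already guarantees that $(\alpha,\beta)$-revival occurs; condition (2) is then automatically satisfied and is retained only to match the original formulation of Cao and Luo. I do not expect any substantive obstacle here: the argument is merely a dictionary between the degree-one character language used in the corollary and the general character language of Theorem \ref{thm::1}. The single point demanding care is the overloaded notation---the symbol $x$ in \eqref{eq::5} denotes the order-two element, whereas in the corollary $x$ and $y$ are the two vertices---so one must consistently read the order-two element in \eqref{eq::5} as $x-y$ throughout.
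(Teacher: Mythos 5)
Your proposal is correct and follows essentially the same route as the paper, which likewise obtains the corollary as an immediate specialization of Theorem \ref{thm::1} after noting that abelian Cayley graphs are quasi-abelian, that all irreducible characters $\chi_g$ have degree one, and that (via Lemma \ref{lem::2}) condition (a) reduces to $x-y$ having order two, with $\widehat{G}_0,\widehat{G}_1$ corresponding to $G_0,G_1$ of \eqref{eq::5}. Your explicit remarks that integrality (2) is supplied by the ``in particular'' clause in one direction and is redundant in the other, and that the element in \eqref{eq::5} must be read as $x-y$, are both consistent with the paper's treatment.
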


Let $G$ be a finite group, and let $\Gamma = \mathrm{Cay}(G,S)$ be a connected integral quasi-abelian  Cayley graph over $G$. Then $\lambda_\chi\in \mathbb{Z}$ for all $\chi\in \widehat{G}$. If $|G|=2$, we have $|S|=1$ and $\Gamma\cong K_2$. As in Corollary \ref{cor::0}, we see that $\Gamma$ has fractional revival between its two vertices at any time $t\notin\{k\pi\mid k\in\mathbb{Z}\}$. Now assume that $|G|\geq 3$. Let $a$ be a fixed element of order two that lies in the center of $G$. According to Lemma \ref{lem::2}, we have $\chi(a)=\pm d_\chi$ for all $\chi\in\widehat{G}$. With respect to $a$, we define 
$\widehat{G}_0 = \{\chi\in \widehat{G}\mid \chi(a)=d_\chi\}$ and $\widehat{G}_1=\{\chi\in \widehat{G}\mid \chi(a)=-d_\chi\}$.
Let $\chi'$ be a fixed character in $\widehat{G}_1$. 
We claim that either $\lambda_\chi\neq |S|$ for some $\chi\in\widehat{G}_0$ or   $\lambda_\chi\neq \lambda_{\chi'}$ for some $\chi\in\widehat{G}_1$. If not, then $\Gamma$ has exactly two distinct eigenvalues, namely $|S|$ and $\lambda_{\chi'}$, with the same multiplicity $\frac{|G|}{2}$. Since $\Gamma$ is connected, we must have $|G|=2$
and $|S|=1$, contrary to the assumption. Let $N_a=\{|S|-\lambda_\chi:\chi\in \widehat{G}_0\}\cup \{\lambda_{\chi'}-\lambda_\chi:\chi\in \widehat{G}_1\}$. As $N_a$ contains at least one nonzero integer, we can define 
\begin{equation}\label{eq::6}
   M_a=\mathrm{gcd}(n_a:n_a\in N_a).
\end{equation}

Suppose further that  $(\alpha,\beta)$-revival occurs on $\Gamma$ from a vertex $u$ to a vertex $v$ at time $t$.  Writing $t=2\pi T$, it follows from  \eqref{eq::3} that
\begin{equation}\label{eq::7}
    T(|S|-\lambda_\chi)\in \mathbb{Z} ~\mbox{for all}~ \chi\in \widehat{G}_0~\mbox{and}~ T(\lambda_{\chi'}-\lambda_\chi)\in \mathbb{Z}~\mbox{for all}~  \chi\in \widehat{G}_1. 
\end{equation}
Since $uv^{-1}$ is an element of order two that lies in the center of $G$, we may set  $a=uv^{-1}$ and   $M=M_{uv^{-1}}$. Combining  \eqref{eq::6} with \eqref{eq::7}, we derive
\begin{equation}\label{eq::8}
    TM\in \mathbb{Z}.
\end{equation}
Furthermore, we observe that $M$ cannot equal $1$ and must be a divisor of $|G|$.

\begin{corollary}\label{cor::2}
  Let $G$ be a group with $|G|\geq 3$, and let $\Gamma=\mathrm{Cay}(G,S)$ be a connected 
  quasi-abelian Cayley graph over $G$. If $(\alpha,\beta)$-revival occurs on $\Gamma$ from a vertex $u$ to a vertex $v$ at time $t$, then $M\neq 1$ and $M$ is a divisor of $|G|$, where $M=M_{uv^{-1}}$ is defined by \eqref{eq::6}. In particular, $\mathrm{exp}(\mathrm{i}t)$ is a $|G|$-th root of unity.
\end{corollary}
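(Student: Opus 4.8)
The plan is to build on everything already in place: the integrality of $\Gamma$ from Theorem \ref{thm::1} (so $\lambda_\chi\in\mathbb{Z}$ for all $\chi$), the congruences \eqref{eq::7}, and their consequence \eqref{eq::8} that $TM\in\mathbb{Z}$, where $t=2\pi T$, $a=uv^{-1}$ and $M=M_{uv^{-1}}$. From these I would prove the three assertions ($M\neq 1$, $M\mid |G|$, and that $\exp(\mathrm{i}t)$ is a $|G|$-th root of unity) in turn, observing first that the last is a formal consequence of the other two: once $M\mid |G|$ and $TM\in\mathbb{Z}$ are known, $|G|T=(|G|/M)(TM)\in\mathbb{Z}$, whence $\exp(\mathrm{i}t)^{|G|}=\exp(2\pi\mathrm{i}\,|G|T)=1$.

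For $M\neq 1$ I would argue by contradiction. If $M=1$, then \eqref{eq::8} gives $T\in\mathbb{Z}$; since $\Gamma$ is integral, $\exp(\mathrm{i}t\lambda_\chi)=\exp(2\pi\mathrm{i}T\lambda_\chi)=1$ for every $\chi\in\widehat{G}$, so $H(t)=\sum_{\chi}\exp(\mathrm{i}t\lambda_\chi)E_\chi=\sum_\chi E_\chi=I$, contradicting $H(t)_{u,v}=\beta\neq 0$ for $u\neq v$.

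The main work, and the step I expect to be the real obstacle, is the divisibility $M\mid |G|$, i.e.\ showing that $|G|$ lies in the ideal of $\mathbb{Z}$ generated by $N_{uv^{-1}}$. My plan is to combine the fact that each $|S|-\lambda_\chi$ with $\chi\in\widehat{G}_0$ is a multiple of $M$ with character orthogonality. Let $[\,P\,]$ denote $1$ if $P$ holds and $0$ otherwise. I would evaluate the weighted sum $\Phi(g)=\sum_{\chi\in\widehat{G}_0}d_\chi\chi(g)(|S|-\lambda_\chi)$ at $g=s$ for some $s\in S\setminus\{a\}$, which is nonempty since $\Gamma$ is connected with $|G|\geq 3$ (this rules out $S=\emptyset$ and $S=\{a\}$). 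On one hand, since each $|S|-\lambda_\chi\in M\mathbb{Z}$ while each $d_\chi\chi(s)$ is an algebraic integer, $\Phi(s)/M$ is an algebraic integer; on the other hand $\Phi(s)$ will be a rational integer, so $\Phi(s)/M\in\mathbb{Q}\cap\overline{\mathbb{Z}}=\mathbb{Z}$, i.e.\ $M\mid \Phi(s)$. The crux is thus the closed-form computation of $\Phi(s)$. Using that $\tfrac{d_\chi+\chi(a)}{2d_\chi}$ equals $1$ on $\widehat{G}_0$ and $0$ on $\widehat{G}_1$, the orthogonality relations $\sum_\chi d_\chi\chi(h)=|G|\,[\,h=1\,]$ and $\sum_\chi d_\chi\chi(h)\lambda_\chi=|G|\,[\,h\in S\,]$ (the latter being $|G|$ times the $(h,1)$-entry of $A$), together with the centrality identity $\chi(a)\chi(g)=d_\chi\chi(ag)$ (valid because $a$ is central of order two, so $\rho^\chi(a)=\pm I$ and $\chi(ag)=\tfrac{\chi(a)}{d_\chi}\chi(g)$), I expect the $\delta$-type terms to drop out for $g=s\notin\{1,a\}$ and to obtain $\Phi(s)=-\tfrac{|G|}{2}\big(1+[\,as\in S\,]\big)$. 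Then $M\mid \tfrac{|G|}{2}(1+[\,as\in S\,])$ with $1+[\,as\in S\,]\in\{1,2\}$, so $M$ divides either $\tfrac{|G|}{2}$ or $|G|$; since both divide $|G|$, this yields $M\mid |G|$.

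I anticipate the remaining points to be routine bookkeeping: verifying the two orthogonality evaluations and the centrality identity $\chi(a)\chi(g)=d_\chi\chi(ag)$, and confirming that the choice $g=s$ avoids $\{1,a\}$ so that the term $|S|\sum_{\chi\in\widehat{G}_0}d_\chi\chi(g)$ vanishes. With $M\neq 1$ and $M\mid |G|$ established, the final ``in particular'' statement follows from $TM\in\mathbb{Z}$ exactly as noted in the first paragraph, completing the proof.
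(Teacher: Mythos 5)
Your proposal is correct and takes essentially the same route as the paper: $M\neq 1$ via $TM\in\mathbb{Z}$, $T\in\mathbb{Z}\Rightarrow H(t)=I$, contradicting $\beta\neq 0$; then $M\mid |G|$ by weighting the congruences $M\mid(|S|-\lambda_\chi)$ with character values at a well-chosen $s\in S\setminus\{uv^{-1}\}$, evaluating the sum in closed form by orthogonality, and invoking that a rational algebraic integer is an integer; and the root-of-unity claim as the formal consequence $|G|T=(|G|/M)(TM)\in\mathbb{Z}$. The only cosmetic difference is that you sum over $\widehat{G}_0$ alone (yielding $-\tfrac{|G|}{2}\bigl(1+[\,as\in S\,]\bigr)$ and a harmless two-case finish), whereas the paper evaluates $\sum_{\chi\in\widehat{G}}\lambda_\chi d_\chi\overline{\chi(w)}$ over all of $\widehat{G}$, using both families of multiples $Mt_\chi$ and $Ms_\chi$, to exhibit $|G|/M$ directly as an algebraic integer.
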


\begin{proof}
If $M=1$, then \eqref{eq::8} implies that $T\in \mathbb{Z}$. Since $\Gamma$ is an integral graph, we have 
\begin{equation*}
        H(t)=\sum_{\chi\in\widehat{G}}\exp(\mathrm{i}t\lambda_\chi)E_\chi=\sum_{\chi\in\widehat{G}}E_\chi=I,
    \end{equation*}
   which is impossible because $\beta\neq 0$. Thus, $M\neq 1$.

Write $|S| - \lambda_\chi = Mt_\chi$ for $\chi \in \widehat{G}_0$ and $\lambda_{\chi'} - \lambda_\chi = Ms_\chi$ for $\chi \in \widehat{G}_1$. Clearly, $t_\chi, s_\chi \in \mathbb{Z}$ for all $\chi \in \widehat{G}$. Since $|G|\geq 3$, we have $|S| \geq 2$, and so there exists some $w \in S$ such that $w \notin C_{uv^{-1}}=\{uv^{-1}\}$. By the second orthogonality relation for irreducible characters of $G$, we have
\begin{equation}\label{eq::9}
    \sum_{\chi \in \widehat{G}} \lambda_\chi d_\chi \overline{\chi(w)} = \sum_{\chi \in \widehat{G}} \left( \sum_{s \in S} \frac{\chi(s)}{d_\chi} \right) d_\chi \overline{\chi(w)} = \sum_{s \in S} \sum_{\chi \in \widehat{G}} \chi(s) \overline{\chi(w)} = \frac{|G|}{|C_w|},
\end{equation}
where $C_w$ is the conjugacy class of $G$ containing $w$. On the other hand,
\begin{equation}\label{eq::10}
    \sum_{\chi \in \widehat{G}} \lambda_\chi d_\chi \overline{\chi(w)} = \sum_{\chi \in \widehat{G}_0} (|S| - Mt_\chi) d_\chi \overline{\chi(w)} + \sum_{\chi \in \widehat{G}_1} (\lambda_{\chi'} - Ms_\chi) d_\chi \overline{\chi(w)}.
\end{equation}
Moreover, since $w\neq 1$ and $w \notin C_{uv^{-1}}$, we have
\begin{equation}\label{eq::11}
\left\{
\begin{aligned}
    &\sum_{\chi \in \widehat{G}_0} |S| d_\chi \overline{\chi(w)} = |S| \sum_{\chi \in \widehat{G}} \frac{d_\chi + \chi(uv^{-1})}{2} \overline{\chi(w)} = 0, \\
    &\sum_{\chi \in \widehat{G}_1} \lambda_{\chi'} d_\chi \overline{\chi(w)} = \lambda_{\chi'} \sum_{\chi \in \widehat{G}} \frac{d_\chi - \chi(uv^{-1})}{2} \overline{\chi(w)} = 0.
\end{aligned}
\right.
\end{equation}
Combining \eqref{eq::9}, \eqref{eq::10} and \eqref{eq::11}, we obtain
\begin{equation}\label{eq::12}
    \frac{|G|}{M} = -|C_w| \sum_{\chi \in \widehat{G}_0} t_\chi d_\chi \overline{\chi(w)} - |C_w| \sum_{\chi \in \widehat{G}_1} s_\chi d_\chi \overline{\chi(w)}.
\end{equation}
Note that $t_\chi, s_\chi \in \mathbb{Z}$ and $\overline{\chi(w)}$ are algebraic integers for all $\chi \in \widehat{G}$. Then from \eqref{eq::12} we deduce that  $\frac{|G|}{M}$ is an algebraic integer, and so must be an integer. Therefore, $M$ is a divisor of $|G|$, and so $\mathrm{exp}(\mathrm{i}t)=\mathrm{exp}(2\pi \mathrm{i}T)$ is a $|G|$-th root of unity by \eqref{eq::8}.
 The result follows.
\end{proof}

At the end of this paper, we provide an example of quasi-abelian Cayley graph that admits fractional revival.

\begin{example}
\rm
Let $G = \mathbb{Z}_6 \times D_3$, where $D_3 = \langle a, b \mid a^3 = b^2 = e, \, b^{-1}ab = a^{-1} \rangle$. Consider the set $S = \{(1,a), (5,a), (1,a^2), (5,a^2), (1,b), (5,b), (1,ba), (5,ba), (1,ba^2), (5,ba^2)\}$. It is straightforward to verify that the Cayley graph $\Gamma = \text{Cay}(G, S)$ is connected and quasi-abelian. Let $u = (0, e)$ and $v = (3, e)$. Then $uv^{-1} = (3, e)$, which is an element of order $2$ that lies in the center of $G$. By Lemma \ref{lem::2}, we have $\chi(uv^{-1}) = \pm d_{\chi}$ for all $\chi \in \widehat{G}$.  For any $\chi \in \widehat{G}$, there exist unique characters $\varphi \in \widehat{\mathbb{Z}_6}$ and $\psi \in \widehat{D_3}$ such that $\chi = \varphi \otimes \psi$, i.e., $\chi(g) = \varphi(k)\psi(h)$ for all $g = (k, h) \in G$. Table \ref{tab::1} lists the irreducible characters of $\mathbb{Z}_6$ and $D_3$. With respect to $uv^{-1}$, the characters of $G$ partition into:
 \begin{table}[t]
    \centering
    \caption{Character tables of $\mathbb{Z}_6$ and $D_3$.}
    \label{tab::1}
    \begin{tabular}{*{7}{c}|*{4}{c}}
        \toprule
        \multicolumn{7}{c|}{$\mathbb{Z}_6$} & \multicolumn{4}{c}{$D_3$} \\
        \cmidrule(r){1-7} \cmidrule(l){8-11}
         Class&$\{0\}$&$\{1\}$&$\{2\}$&$\{3\}$&$\{4\}$&$\{5\}$&Class&$\{e\}$&$\{a,a^2\}$&$\{b,ba,ba^2\}$\\
                \midrule
                $\varphi_1$&1&1&1&1&1&1&$\psi_1$&1&1&1\\
                $\varphi_2$&1&$\exp(\frac{\pi\mathrm{i}}{3})$&$\exp(\frac{2\pi\mathrm{i}}{3})$&$-1$&$\exp(\frac{4\pi\mathrm{i}}{3})$&$\exp(\frac{5\pi\mathrm{i}}{3})$&$\psi_2$&1&1&$-1$\\
                $\varphi_3$&1&$\exp(\frac{2\pi\mathrm{i}}{3})$&$\exp(\frac{4\pi\mathrm{i}}{3})$&$1$&$\exp(\frac{2\pi\mathrm{i}}{3})$&$\exp(\frac{4\pi\mathrm{i}}{3})$&$\psi_3$&2&$-1$&0\\
                $\varphi_4$&1&$-1$&1&$-1$&1&$-1$\\
                $\varphi_5$&1&$\exp(\frac{4\pi\mathrm{i}}{3})$&$\exp(\frac{2\pi\mathrm{i}}{3})$&1&$\exp(\frac{4\pi\mathrm{i}}{3})$&$\exp(\frac{2\pi\mathrm{i}}{3})$\\
                $\varphi_6$&1&$\exp(\frac{5\pi\mathrm{i}}{3})$&$\exp(\frac{4\pi\mathrm{i}}{3})$&$-1$&$\exp(\frac{2\pi\mathrm{i}}{3})$&$\exp(\frac{\pi\mathrm{i}}{3})$\\
    \bottomrule
    \end{tabular}
\end{table}
\begin{equation*}
\begin{aligned}
    \widehat{G}_0 &= \{\varphi_1 \otimes \psi_1, \varphi_1 \otimes \psi_2, \varphi_1 \otimes \psi_3, \varphi_3 \otimes \psi_1, \varphi_3 \otimes \psi_2, \varphi_3 \otimes \psi_3, \varphi_5 \otimes \psi_1, \varphi_5 \otimes \psi_2, \varphi_5 \otimes \psi_3\}, \\
    \widehat{G}_1 &= \{\varphi_2 \otimes \psi_1, \varphi_2 \otimes \psi_2, \varphi_2 \otimes \psi_3, \varphi_4 \otimes \psi_1, \varphi_4 \otimes \psi_2, \varphi_4 \otimes \psi_3, \varphi_6 \otimes \psi_1, \varphi_6 \otimes \psi_2, \varphi_6 \otimes \psi_3\}.
\end{aligned}
\end{equation*}
Then a direct calculation shows that
\begin{align*}
    \exp\left(\frac{2\pi\lambda_\chi\mathrm{i}}{3}\right) &= 
    \begin{cases}
        \exp\left(\frac{2\pi\mathrm{i}}{3}\right) = \cos\frac{2\pi}{3} + \mathrm{i}\sin\frac{2\pi}{3}, & \text{if } \chi \in \widehat{G}_0, \\
        \exp\left(\frac{4\pi\mathrm{i}}{3}\right) = \cos\frac{2\pi}{3} - \mathrm{i}\sin\frac{2\pi}{3}, & \text{if } \chi \in \widehat{G}_1.
    \end{cases}
\end{align*}
By Theorem \ref{thm::1}, we conclude that $(\cos\frac{2\pi}{3}, \mathrm{i}\sin\frac{2\pi}{3})$-revival occurs on $\Gamma$ from vertex $u = (0, e)$ to vertex $v = (3, e)$ at time $t = \frac{2\pi}{3}$.
\end{example}

	\section*{Declaration of Interest Statement}
	
	The authors declare that they have no known competing financial interests or personal relationships that could have appeared to influence the work reported in this paper.
	
	\section*{Acknowledgements}
	
	X. Huang was supported by the National Natural Science Foundation of China (No. 12471324) and the Natural Science Foundation of Shanghai (No. 24ZR1415500).  X. Liu was supported by the National Natural Science Foundation of China (No. 12371358) and the Guangdong Basic and Applied Basic Research Foundation (No.
2023A1515010986).
	
	\section*{Data availability}
	No data was used for the research described in the article.

\end{document}